\documentclass{amsart}  	
\usepackage{geometry}                		
\usepackage{graphicx}				
\usepackage{amsthm}								
\usepackage{amssymb}
\newtheorem{theorem}{Theorem}[section]

\newtheorem{lemma}[theorem]{Lemma}
\newtheorem{corollary}[theorem]{Corollary}

\newtheorem{note}{Note}

\begin{document}

\title{Betti numbers of split graphs}
\author{Ralf Fr\"oberg}
\footnote{Department of mathematics, Stockholm university, Sweden, email: frobergralf@gmail.com, ORCID: 0000-0002-7294-2852}
\date{}							

\begin{abstract}
A split graph  is a graph where the vertices are a disjoint union of a complete part $C=\{x_i,\ldots,x_n\}$ and a stable part $S=\{y_1,\ldots,y_m\}$.
We will determine the Betti numbers of the edge ring of all split graphs, in particular show that the only nonzero Betti numbers are $\beta_{0,0}$
and $\beta_{i,i+1}$, $i>0$. The Betti numbers only depend on the multiset of the number of neighbors in $S$ the $x_i$'s have. Singh
and Verma have earlier determined the Betti numbers for complete split graphs (where all $y_i$ are neighbors to all $x_j$), and
for "nearly complete" split graphs (where all $y_i$ are neighbors to all $x_j$, except that $y_i$ is not a neighbor to $x_i$ for $i=1,\ldots,\min\{m,n\}$).
We also determine which split graphs that have Cohen-Macaulay edge ring.
\end{abstract}

\maketitle
\section{Preliminaries}
\subsection{Graphs}
By a graph $G=(V,E)$ we mean a finite simple graph. Thus $V$ is a finite set of vertices $V=\{v_1,\ldots,v_n\}$ and $E$ consists
of edges $(v_i,v_j)$, $i\ne j$, at most one for each pair of vertices. $G_1=(V_1,E_1)$ is a subgraph of $G_2=(V_2,E_2)$ if
$V_1\subseteq V_2$ and $E_1\subseteq E_2$. The subgraph is induced if $E_1=\{(v_i,v_j);v_1,v_2\in V_1,/v_1,v_2)\in E_2\}$.
The complement $\overline G=(V,\overline E)$ of $G$  has the same
vertices as $G$ and $(v_,v_j)\in\overline E$ if and only if $(v_i,v_j)\notin E$. A graph with $V=(\{v_1,\ldots,v_n\},E)$ is called complete, 
and is denoted $K_n$, if $(v_i,v_j)\in E$
for all $i\ne j$. A subset $S\subseteq V$ of a graph $G=(V,E)$ is called stable if there are no edges in $G$ with both vertices in $S$. A graph is
chordal if there are no induced cycles of length $>3$ in $G$, it is cochordal if $\overline G$ is chordal.

The edge ring $k[G]$ over a field $k$, of a graph $G=(\{v_1,\ldots,v_n\},E)$ is $k[x_1,\ldots,x_n]/(x_ix_j;(v_i,v_j)\in E)$.

\subsection{Simplicial complexes}
A (finite) simplicial complex is a set $\Delta$ consisting of subset (simplices), called faces, of a finite set $V=\{v_1,\ldots,v_n\}$ of vertices, such 
that $\{v_i\}\in\Delta$ for all $i$, and such that if $S\in\Delta$ and $T\subseteq S$, then $T\in\Delta$. The dimension of a face is one less
than its number of vertices.

For a simplicial complex $\Delta$ on $\{v_1,\ldots,v_n\}$, the Stanley-Reisner ring over a field $k$ is $k[x_1,\ldots,x_n]/I$, where $I$ is generated by all
squarefree monomials $x_{i_1}\cdots x_{i_k}$ for all $i_1,\ldots,i_k$ for which $\{v_{i_1},\ldots,v_{i_k}\}$ are not a face of $\Delta$.

We will now define a very special kind of simplicial complexes, fat forests. Fat forests are defined recursively as follows. 
A $d$-simplex (i.e. with $d+1$ vertices) $F_1$ of dimension $\ge0$ is a fat forest.
If $F_i$, $i=1,\ldots,k$, are simplices and $G_{k-1}=F_1\cup\cdots\cup F_{k-1}$ is a fat forest, then $G_{k-1}\cup F_k$ is a fat forest if 
$H=G_{k-1}\cap F_k$ is a simplex, $\dim H\ge -1$. (If $\dim H=-1$, then $G_{k-1}$ and $F_k$ are disjoint.)
It is easy to get the Hilbert series for the Stanley-Reisner ring of a fat forest.

The following is \cite[Theorem 1]{Fr1}
\begin{theorem}
Let $F=F_1\cup\cdots\cup F_k$ be a fat forest with $F_i$ a simplex of dimension $d_i-1$ and $F_1\cup\cdots\cup F_{j-1}\cap F_j$ a
simplex of dimension $r_j-1$. Then the Hilbert series $\sum_{i\ge0}\dim_kk[F]_it^i$ of $k[F]$ is 
$$\sum_{i=1}^k\frac{1}{(1-t)^{d_i}}-\sum_{i=2}^k\frac{1}{(1-t)^{r_i}}.$$
\end{theorem}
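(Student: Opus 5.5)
We prove the formula by induction on $k$, the number of simplices in the recursive construction of the fat forest. The underlying fact is that the Hilbert series of a Stanley--Reisner ring is additive over unions of subcomplexes, up to a correction for the intersection.

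The basic input is the standard description of the Hilbert series of $k[\Delta]$ for any simplicial complex $\Delta$. A $k$-basis of $k[\Delta]$ is given by the monomials whose support is a face of $\Delta$. Grouping these monomials by their support $\sigma$, each face $\sigma$ with $|\sigma|=j$ contributes $t^j/(1-t)^j$. This gives
$$H_{k[\Delta]}(t)=\sum_{\sigma\in\Delta}\Big(\frac{t}{1-t}\Big)^{|\sigma|},$$
where the empty face contributes $1$.

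From this description, inclusion--exclusion is immediate. If $\Delta=\Delta_1\cup\Delta_2$ with both $\Delta_i$ subcomplexes, then the faces of $\Delta$ are the faces of $\Delta_1$ together with those of $\Delta_2$, and the faces counted twice are exactly the faces of $\Delta_1\cap\Delta_2$. Hence
$$H_{k[\Delta]}=H_{k[\Delta_1]}+H_{k[\Delta_2]}-H_{k[\Delta_1\cap\Delta_2]}.$$

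Next we compute the series for a simplex. If $F$ is a full simplex on $d$ vertices, then $k[F]$ is the polynomial ring in $d$ variables, so its Hilbert series is $1/(1-t)^d$. This also holds for $d=0$: the empty simplex gives $k[F]=k$, with Hilbert series $1$. Equivalently, $\sum_j\binom dj\big(\frac{t}{1-t}\big)^j=(1-t)^{-d}$.

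For the induction, the case $k=1$ is the simplex computation. For the step, set $G_{j-1}=F_1\cup\cdots\cup F_{j-1}$, which is a fat forest with $j-1$ simplices. Apply inclusion--exclusion to $G_j=G_{j-1}\cup F_j$. By the definition of a fat forest, the intersection $G_{j-1}\cap F_j$ is a simplex of dimension $r_j-1$, so its ring has Hilbert series $1/(1-t)^{r_j}$. The inductive hypothesis gives the series for $G_{j-1}$, and the simplex computation gives $1/(1-t)^{d_j}$ for $F_j$. Adding these and subtracting $1/(1-t)^{r_j}$ yields the claimed formula for $G_j$.

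The disjoint case $\dim H=-1$ needs a separate check, since the argument must handle $r_j=0$. Here the intersection is the complex consisting only of the empty face, whose ring is $k$, so the correction term is $1/(1-t)^0=1$. This agrees with the combinatorial formula, because the empty face lies in both pieces and would otherwise be counted twice.

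There is no real obstacle in this argument. The one point that needs care is verifying that the face-count description of the Hilbert series, and hence the inclusion--exclusion formula, treats the empty face consistently in all cases.
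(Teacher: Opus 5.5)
Your proof is correct. The paper gives no argument of its own for this statement: it quotes it as \cite[Theorem 1]{Fr1} with the remark that it is easy. Your induction along the fat-forest ordering is the standard additivity argument behind that remark. It uses the face-sum formula $H_{k[\Delta]}(t)=\sum_{\sigma\in\Delta}(t/(1-t))^{|\sigma|}$ together with inclusion--exclusion, which is equivalent to the exact sequence $0\to k[\Delta_1\cup\Delta_2]\to k[\Delta_1]\oplus k[\Delta_2]\to k[\Delta_1\cap\Delta_2]\to 0$. It also correctly treats an empty intersection as contributing $1/(1-t)^0=1$.
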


\subsection{Betti numbers}
A graded algebra $R=S/I$, where $S=k[x_1,\ldots,x_n]$, has a 2-linear resolution if $I$ is generated in degree 2 and all higher syzygies are linear,
equivalently if ${\rm Tor}_{i,j}^S(R,k)=0$ if $j\ne i+1$ for $i>0$. The graded Betti number $\dim_k({\rm Tor}_{i,j}^S(R,k))$ is denoted $\beta_{i,j}(R)$.
The $i$th total Betti number is $b_i=\sum_j\beta_{i,j}$.
A graded algebra $R=S/I$ has a minimal graded resolution like this:
$$0\leftarrow S/I\leftarrow S\leftarrow \oplus_{j=1}^{b_1}S[-a_{1,j}]\leftarrow \oplus_{j=1}^{b_2}S[-a_{2,j}]\leftarrow\cdots\leftarrow \oplus_{j=1}^{b_p}S[-a_{p,j}]\leftarrow0$$
where $S[-k]$ means that we have shifted $S$ by $k$ steps. Using that the alternating sum of the dimension in an exact sequence of vector spaces
is zero, we get the following well known expression for the Hilbert series $\sum_{i\ge0}\dim_kR_it^i$ of $R$.

\begin{theorem}\label{res}
A graded algebra with a resolution as above has Hilbert series $$\frac{\sum_{i=0}^p(-1)^i\sum_{j=1}^{b_i}t^{a_{i,j}}}{(1-t)^n}.$$
\end{theorem}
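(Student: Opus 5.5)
The statement is Theorem~\ref{res}: for the minimal graded resolution displayed above, the Hilbert series of $R=S/I$ equals $\sum_{i=0}^p(-1)^i\sum_{j=1}^{b_i}t^{a_{i,j}}/(1-t)^n$. The plan is to pass to each graded degree $d$ separately. In that degree the resolution is a finite exact sequence of finite-dimensional $k$-vector spaces, so its alternating sum of dimensions vanishes. Summing over $d$ with weight $t^d$ then gives an identity of power series, and it only remains to compute the Hilbert series of each shifted free module.

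\textbf{Step 1.} Fix $d\ge 0$ and take the degree-$d$ strand
$$0\leftarrow R_d\leftarrow S_d\leftarrow \oplus_{j}S[-a_{1,j}]_d\leftarrow\cdots\leftarrow\oplus_j S[-a_{p,j}]_d\leftarrow 0.$$
It is exact because the maps in a graded resolution have degree zero. Each term is finite-dimensional, since $S_e$ has dimension $\binom{n-1+e}{n-1}$. Hence
$$\dim_k R_d=\sum_{i=0}^p(-1)^i\sum_{j=1}^{b_i}\dim_k S[-a_{i,j}]_d .$$
Here we read the $i=0$ term as $S=S[-a_{0,1}]$ with $b_0=1$ and $a_{0,1}=0$, which is what makes the sum start at $i=0$.

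\textbf{Step 2.} Since $S[-a]_d=S_{d-a}$, which is zero when $d<a$, we get $\sum_d \dim_k S[-a]_d\, t^d=t^a H_S(t)$.

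\textbf{Step 3.} We need $H_S(t)=1/(1-t)^n$. This holds because the monomials in $n$ variables factor as products over the variables, so $H_S(t)=\prod_{i=1}^n(1+t+t^2+\cdots)$. Equivalently, $\sum_e\binom{n-1+e}{n-1}t^e=(1-t)^{-n}$.

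\textbf{Step 4.} Multiply the identity from Step 1 by $t^d$ and sum over $d$. The sum over $i$ is finite, since there are $p+1$ terms each with finitely many summands, so the order of summation can be exchanged. Combining with Steps 2 and 3 yields the formula.

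There is no real obstacle here. The only points needing care are that the strands are finite-dimensional and exact, which relies on the resolution being graded with degree-preserving maps, and the convention for the $i=0$ term.
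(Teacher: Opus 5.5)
Your proposal is correct and takes the same approach as the paper. The paper simply cites the vanishing of the alternating sum of dimensions in an exact sequence of vector spaces, applied degree by degree. Your Steps 1--4 spell out that argument, including the identity $\sum_d \dim_k S[-a]_d\,t^d = t^a/(1-t)^n$ and the convention $b_0=1$, $a_{0,1}=0$, which the paper leaves implicit.
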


 Now
$S/I$ has an $2$-linear resolution if for each $i$ we have $a_{i,j}=i+1$ for all $j$. 
are linear. Thus we have

\begin{lemma}\label{lin} A ring $k[x_1,\ldots,x_n]/I$ with $2$-linear resolution
has Hilbert series 
$$\frac{1-b_1t^{2}+b_2t^{3}-\cdots+(-1)^pb_pt^{p+1}}{(1-t)^n}.$$
\end{lemma}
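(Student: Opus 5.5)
This is a direct specialization of Theorem \ref{res}, so the plan is simply to substitute the shifts of a $2$-linear resolution into that formula and collect terms.

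First, write down the minimal graded resolution of $R=k[x_1,\ldots,x_n]/I$ as in the display preceding Theorem \ref{res}:
$$0\leftarrow R\leftarrow S\leftarrow \oplus_{j=1}^{b_1}S[-a_{1,j}]\leftarrow\cdots\leftarrow \oplus_{j=1}^{b_p}S[-a_{p,j}]\leftarrow0.$$
The free module in homological degree $0$ is $S$ itself, so its single shift is $0$ and it contributes the term $t^0=1$ to the numerator.

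Next, use the hypothesis. By definition, a $2$-linear resolution means $\beta_{i,j}(R)=0$ for $j\ne i+1$ when $i>0$. Equivalently, $a_{i,j}=i+1$ for every $j=1,\ldots,b_i$, as stated just before the lemma. Hence the inner sum $\sum_{j=1}^{b_i}t^{a_{i,j}}$ collapses to $b_it^{i+1}$ for each $i\ge1$.

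Finally, substitute into Theorem \ref{res}. The numerator becomes
$$1+\sum_{i=1}^p(-1)^ib_it^{i+1}=1-b_1t^2+b_2t^3-\cdots+(-1)^pb_pt^{p+1},$$
and the denominator $(1-t)^n$ is unchanged. This is exactly the asserted formula.

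There is no real obstacle. The only point needing care is the bookkeeping: the $i=0$ term is $1$ (with $b_0=1$ and shift $0$), and the sign $(-1)^i$ attaches to $b_it^{i+1}$. Theorem \ref{res} itself is taken as given; it follows from additivity of Hilbert series along the exact resolution together with $\mathrm{Hilb}(S[-a],t)=t^a/(1-t)^n$.
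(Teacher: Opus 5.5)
Your proposal is correct and matches the paper's argument: the paper likewise observes that a $2$-linear resolution means $a_{i,j}=i+1$ for all $j$ and substitutes this into Theorem~\ref{res}. Your explicit handling of the $i=0$ term (shift $0$, contributing $1$) just spells out bookkeeping the paper leaves implicit.
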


So, if $S/I$ has an $2$-linear resolution, the Hilbert series gives the graded Betti numbers. 
For a graph $G$, let $\Delta(G)$ be the largest simplicial complex with $G$ as 1-skeleton.
The following theorems summarize results from \cite{Fr} and \cite{Fr1}.

\begin{theorem}
If $G$ is a graph such that $k[G]$ has a 2-linear resolution. Then $k[G]$ equals  $k[\Delta(\overline G)]$.
\end{theorem}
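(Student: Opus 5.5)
Since $k[G]$ and $k[\Delta(\overline G)]$ are both quotients of the same polynomial ring $S=k[x_1,\ldots,x_n]$, it suffices to show that the two defining ideals coincide. Write $I(G)=(x_ix_j;(v_i,v_j)\in E)$ and $I_\Delta$ for the Stanley--Reisner ideal of $\Delta=\Delta(\overline G)$. The plan is to first show $I(G)\subseteq I_\Delta$ directly, then reduce the reverse inclusion to a combinatorial statement about cliques of $\overline G$, and prove that statement using the 2-linear resolution hypothesis.

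\textbf{The inclusion $I(G)\subseteq I_\Delta$.} The faces of $\Delta$ are exactly the cliques of $\overline G$, since $\Delta$ is the largest complex with 1-skeleton $\overline G$. If $(v_i,v_j)\in E$, then $\{v_i,v_j\}$ is not an edge of $\overline G$, hence not a face of $\Delta$. Therefore $x_ix_j\in I_\Delta$.

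\textbf{Reduction of the reverse inclusion.} The minimal generators of $I_\Delta$ are the squarefree monomials $x_{i_1}\cdots x_{i_k}$ such that $\{v_{i_1},\ldots,v_{i_k}\}$ is a minimal non-face of $\Delta$. A set of vertices fails to be a face exactly when it contains a pair that is not an edge of $\overline G$, that is, a pair that is an edge of $G$. Consequently every minimal non-face has exactly two vertices and forms an edge of $G$. So $I_\Delta$ is generated by the same quadrics $x_ix_j$, $(v_i,v_j)\in E$, and $I_\Delta=I(G)$ holds by definition. In this sense the equality is purely formal: it holds for every graph. The real content lies in the identification of $\Delta(\overline G)$ that makes the statement useful.

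\textbf{Use of the hypothesis.} The 2-linear resolution hypothesis is needed for the structural description that the paper will use afterwards: namely, that $\Delta(\overline G)$ is a fat forest, so that Theorem 1 gives the Hilbert series and Lemma \ref{lin} then gives the Betti numbers. For this I would invoke the result of \cite{Fr} that $k[G]$ has a 2-linear resolution exactly when $\overline G$ is chordal. A chordal graph has a perfect elimination ordering, so its clique complex can be built by adding the maximal cliques one at a time, each meeting the previously built complex in a simplex, which is exactly a clique. This is the defining recursion of a fat forest.

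\textbf{Main obstacle.} The step I expect to be hardest is checking that the intersection $G_{k-1}\cap F_k$ is a single simplex and not merely a union of simplices. I would handle it by ordering the maximal cliques along a clique tree of the chordal graph $\overline G$: for such an ordering, the intersection of each new clique with the union of the earlier ones equals its intersection with its parent clique in the tree, which is a single clique.
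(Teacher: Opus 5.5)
Your proof is correct. The paper gives no argument for this statement; it only attributes it, together with the two theorems that follow, to \cite{Fr} and \cite{Fr1}. Your self-contained argument therefore differs from the paper simply by actually proving it.

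Your key observation is the right one. $\Delta(\overline G)$ is the clique complex of $\overline G$, and every singleton is a face. So its minimal non-faces are exactly the pairs $\{v_i,v_j\}$ with $(v_i,v_j)\in E$, which gives $I_{\Delta(\overline G)}=I(G)$ for \emph{every} graph $G$. The 2-linear hypothesis is superfluous for the statement as written. It only matters in the version for an arbitrary Stanley--Reisner ring. There, a 2-linear resolution of $k[\Delta]$ forces $I_\Delta$ to be generated by quadrics. Hence $\Delta$ is the flag complex of its 1-skeleton, i.e.\ $\Delta=\Delta(\overline G)$ for the graph $G$ of minimal non-faces.

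Your last two paragraphs are sound. They order the maximal cliques of a chordal $\overline G$ along a clique tree, with each clique appearing after its parent $C_{p(j)}$. The running intersection property then gives $C_j\cap(C_1\cup\cdots\cup C_{j-1})=C_j\cap C_{p(j)}$. Empty intersections between components are allowed, since $\dim H=-1$ is permitted in the definition. This correctly shows that $\Delta(\overline G)$ is a fat forest.

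That is, however, the content of the next two cited theorems, not of this one. You can drop those paragraphs or keep them as a separate remark.
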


\begin{theorem}
The edge ring $k[G]$ has a 2-linear resolution if and only if $\overline G$ is chordal.
\end{theorem}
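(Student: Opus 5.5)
The plan is to go through Hochster's formula and translate both conditions into topology of the simplicial complex attached to $k[G]$. The monomial ideal $(x_ix_j;(v_i,v_j)\in E)$ is the Stanley--Reisner ideal of the complex whose faces are the vertex sets containing no edge of $G$. These are exactly the cliques of $\overline G$, so $k[G]=k[\Delta(\overline G)]$ for every graph $G$, where $\Delta(\overline G)$ is the clique (flag) complex of $\overline G$. Hochster's formula gives
$$\beta_{i,j}(k[G])=\sum_{W\subseteq V,\ |W|=j}\dim_k\tilde H_{j-i-1}(\Delta(\overline G)_W;k),$$
and $\Delta(\overline G)_W=\Delta(\overline G_W)$, where $\overline G_W$ is the subgraph of $\overline G$ induced on $W$. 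So a 2-linear resolution, i.e. $\beta_{i,j}=0$ for $i>0$ and $j\ne i+1$, is equivalent to the following: for every $W\subseteq V$, the complex $\Delta(\overline G_W)$ has $\tilde H_r=0$ for all $r\ge1$. The case $i>0$, $j=i$ would require $\tilde H_{-1}\neq 0$, which happens only for $W=\emptyset$, and that $W$ only contributes to $i=j=0$.

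For ``only if'', I would assume $\overline G$ is not chordal and take an induced cycle of $\overline G$ of length $\ell\ge4$ on a vertex set $W$. Since the cycle is induced and $\ell\ge4$, it contains no triangles, so $\Delta(\overline G_W)$ is just the cycle viewed as a 1-dimensional complex. Hence $\tilde H_1\neq0$. By the formula this gives $\beta_{\ell-2,\ell}(k[G])\ne0$, and since $\ell-2\ge2>0$ this is a nonlinear syzygy.

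For ``if'', note that induced subgraphs of chordal graphs are chordal. It therefore suffices to show that the clique complex of any chordal graph $H$ has $\tilde H_r=0$ for $r\ge1$, and I would do this by induction on the number of vertices. I would use the standard fact (Dirac) that a chordal graph has a simplicial vertex $v$, i.e. one whose neighbourhood $N$ is a clique. Then
$$\Delta(H)=\Delta(H\setminus v)\cup\langle \{v\}\cup N\rangle,$$
and the two pieces intersect in the full simplex on $N$, which is a face because $N$ is a clique; it is empty if $N=\emptyset$. Mayer--Vietoris then applies. The simplex and the intersection are acyclic when $N\ne\emptyset$, so $\tilde H_r(\Delta(H))\cong\tilde H_r(\Delta(H\setminus v))$ for all $r$. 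When $N=\emptyset$ we add a disjoint point, which changes only $\tilde H_0$. By induction, all $\tilde H_r$ with $r\ge1$ vanish. Incidentally, this construction exhibits $\Delta(H)$ as a fat forest in the sense defined above, matching the earlier results.

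The main obstacle is the existence of a simplicial vertex in a chordal graph, if it is not simply quoted. I would prove it with Dirac's minimal-separator argument: every minimal vertex separator of a chordal graph is a clique. From this, by induction, every chordal graph that is not complete has two nonadjacent simplicial vertices, one on each side of a minimal separator. The remaining steps are formal.
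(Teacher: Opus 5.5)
Your proof is correct and complete. The paper gives no argument for this statement: it quotes it, together with the fat forest characterization, as a known result from \cite{Fr} and \cite{Fr1}. So the relevant comparison is with that citation chain.

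Your route is the standard self-contained proof:
\begin{itemize}
\item You identify $k[G]$ with the Stanley--Reisner ring of the clique complex $\Delta(\overline G)$. You correctly note that this holds for every $G$, whereas the paper states it only under the linear-resolution hypothesis.
\item Hochster's formula then reduces the question to the vanishing of $\tilde H_r(\Delta(\overline G_W))$ for $r\ge1$ and all $W$.
\item Necessity comes from an induced cycle of length $\ell\ge4$, which gives $\beta_{\ell-2,\ell}\neq0$.
\item Sufficiency comes from a simplicial vertex and Mayer--Vietoris.
\end{itemize}

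The paper instead works through the cited statement that $k[\Delta]$ has a 2-linear resolution exactly when $\Delta$ is a fat forest. It needs that decomposition to compute Hilbert series of split graphs with the fat forest formula from \cite{Fr1}, and then to read off Betti numbers via Lemma~\ref{lin}. Your induction glues $\langle\{v\}\cup N\rangle$ onto $\Delta(H\setminus v)$ along the simplex on $N$, which is precisely such a decomposition. So your argument supplies both the statement and the structural fact the paper relies on.

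One point is worth making explicit in your stated equivalence. If $\tilde H_r(\Delta_W)\neq0$ with $r\ge1$, then $|W|\ge r+2$, so the index $i=|W|-r-1$ is automatically positive. You already check this directly in the cycle case, so nothing is missing.
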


\begin{theorem}
The Stanley-Reisner ring $k[\Delta]$ has a 2-linear resolution if and only if $\Delta$ is a fat forest.
\end{theorem}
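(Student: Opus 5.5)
The statement to prove is the last theorem: $k[\Delta]$ has a 2-linear resolution if and only if $\Delta$ is a fat forest. The plan is to reduce to the graph case via the two preceding theorems. If $k[\Delta]$ has a 2-linear resolution, the Stanley--Reisner ideal $I_\Delta$ is generated in degree $2$. So $\Delta$ is a flag complex, $\Delta=\Delta(G)$ with $G$ the 1-skeleton of $\Delta$, and $k[\Delta]=k[\overline G]$. By the theorem just before, $G=\overline{\overline G}$ is chordal. Conversely, if $\Delta$ is a fat forest, I will show it is flag with chordal 1-skeleton, so that the previous theorem gives the 2-linear resolution. The whole statement therefore reduces to one combinatorial fact: a simplicial complex is a fat forest if and only if it is the clique complex of a chordal graph.

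For \emph{fat forest $\Rightarrow$ flag with chordal skeleton} I will induct on $k$, the number of simplices in $F_1\cup\cdots\cup F_k$. Write $G_{k-1}=F_1\cup\cdots\cup F_{k-1}$ and $H=G_{k-1}\cap F_k$, a simplex; we may assume $F_k\not\subseteq G_{k-1}$. Let $W$ be the set of new vertices of $F_k$.
\begin{itemize}
\item[(a)] No edge joins $W$ to a vertex of $G_{k-1}$ outside $H$. Otherwise that edge would lie in $G_{k-1}\cap F_k=H$, which is impossible.
\item[(b)] A clique of the 1-skeleton either avoids $W$, and then it is a face of $G_{k-1}$ by induction, or it meets $W$, and then by (a) it lies in $F_k$. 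Hence the complex is flag.
\item[(c)] An induced cycle of length $\ge 4$ through a vertex $w\in W$ would have both neighbours of $w$ in $F_k$. Since $H$ is a clique, the cycle would then have a chord, a contradiction. So the 1-skeleton is chordal.
\end{itemize}

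For the converse I will use Dirac's theorem that a chordal graph has a simplicial vertex $v$, i.e.\ one whose neighbourhood is a clique. I induct on the number of vertices. Removing $v$ leaves a chordal graph $G'$, and $\Delta(G')$ is a fat forest $F_1\cup\cdots\cup F_{k-1}$ by induction. Since $v$ is simplicial, $F_k=\{v\}\cup N(v)$ is a simplex. Flagness gives $\Delta(G)=\Delta(G')\cup F_k$. Moreover $\Delta(G')\cap F_k$ is the simplex on $N(v)$, because $N(v)$ is a clique of $G'$ and hence a face (possibly empty if $v$ is isolated). Thus $\Delta(G)$ is a fat forest.

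The main obstacle is that the previous theorem is stated for edge rings, so the two directions must be matched carefully. In the forward direction one has to pass from ``$I_\Delta$ generated in degree 2'' to $\Delta$ being the clique complex of its skeleton. In the converse one has to check that the recursive definition of a fat forest permits adding a simplex that meets the previous union in a possibly empty face. Both points are routine, but the Dirac simplicial-vertex step is where the real combinatorial content lies.
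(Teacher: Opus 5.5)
The paper gives no proof of this statement; it is quoted, together with the edge-ring criterion, from \cite{Fr} and \cite{Fr1}. Your proof is correct, and it does more than the paper does: it shows the statement is a formal consequence of the preceding theorem ($k[G]$ has a 2-linear resolution iff $\overline G$ is chordal). Passing to the flag complex handles the algebra. What remains is purely combinatorial: fat forests are exactly the clique complexes of chordal graphs. You prove this by induction in one direction and by Dirac's simplicial-vertex theorem in the other. So nothing beyond the stated edge-ring theorem and Dirac's theorem is needed, whereas the paper simply leans on the literature for both.

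A few local justifications need tightening, though none affects the conclusion.
\begin{itemize}
\item[(a)] The edge $\{w,u\}$ cannot lie in $G_{k-1}$, since $w$ is not a vertex there. The correct reason is different: every face containing $w$ comes from $F_k$, so $u$ is a vertex of both $G_{k-1}$ and $F_k$, hence $u\in H$.
\item[(b), (c)] You also need that $F_k$ creates no new edges among old vertices. Such an edge has both endpoints in $H$, and $H$ is a face of $G_{k-1}$, so the edge is already in $G_{k-1}$. This is what lets you apply the induction hypothesis to cliques and induced cycles that avoid $W$.
\item[(c)] The two cycle-neighbours of $w$ are adjacent because $F_k$ is a simplex. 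They may lie in $W$ rather than in $H$, so "$H$ is a clique" is not the right reason.
\end{itemize}
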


\section{Split graphs}
A graph $G=(V,E)$ is called split if $V$ is the disjoint union $V=C\sqcup S$ of a complete subgraph $C$ and a stable set $S$.
It is easy to see that split graphs are chordal and that the complement of a split graph is a split graph. Let $C=\{x_1,\ldots,x_n\}$
and $S=\{y_1,\ldots,y_m\}$. For each $x_i\in C$ let $N_G(x_i)\cap S$ denote the neighbors of $x_i$ in $S$, and let $n_i=|N_G(x_i)\cap S|$.
Let $\Sigma$ be the simplicial complex such that $k[\Sigma]=k[G]$, so $\Sigma=\Delta(\overline G)$..

\begin{lemma}
If $n_i>0$ for each $i$, then $\Sigma$ has $n+1$ facets, $F_0$, the simplex on $S$, and $F_i$,
the simplex on= $\{x_i\}\cup S_i$, $i=1,\ldots,n$, where $S_i=S\setminus(N_G(x_i)\cap S)$.
If some $n_i=0$, say $n_1=0$, then $\Sigma$ has $n$ facets, $F_1$, the simplex on $\{x_1\}\cup S$, and $F_i$ as above, $i=2,\ldots,n$.
\end{lemma}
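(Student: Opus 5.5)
Since $\Sigma=\Delta(\overline G)$ is the clique complex of $\overline G$, its faces are exactly the cliques of $\overline G$, and its facets are the maximal cliques of $\overline G$. So the plan is to describe $\overline G$ explicitly and then list its maximal cliques. In $\overline G$ the set $C=\{x_1,\ldots,x_n\}$ is stable, because $C$ is complete in $G$. The set $S$ is complete, because $S$ is stable in $G$. Finally, $x_i$ is adjacent to $y_j$ in $\overline G$ exactly when $y_j\notin N_G(x_i)$, i.e. when $y_j\in S_i$.

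First step: classify the cliques of $\overline G$. A clique contains at most one vertex of $C$, since $C$ is stable in $\overline G$. Therefore every clique is either a subset of $S$, or of the form $\{x_i\}\cup T$ with $T$ consisting of common $\overline G$-neighbours of $x_i$ in $S$, i.e. $T\subseteq S_i$. Conversely, every such set is a clique: $S$ is complete and $x_i$ is adjacent to all of $S_i$. Hence the maximal candidates are $S$ itself and the sets $\{x_i\}\cup S_i$ for $i=1,\ldots,n$, and every face of $\Sigma$ lies in one of them.

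Second step: decide which candidates are actually maximal. Take $i\ne j$. The set $\{x_i\}\cup S_i$ is never contained in $\{x_j\}\cup S_j$, because $x_i\notin\{x_j\}\cup S_j$. It is also never contained in $S$, because it contains $x_i$. So each $\{x_i\}\cup S_i$ is a facet. The only remaining question is whether $S$ is a facet. $S$ is contained in some $\{x_i\}\cup S_i$ if and only if $S_i=S$, which holds if and only if $n_i=0$.

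Case $n_i>0$ for all $i$: then $S$ is a facet, and we get the $n+1$ facets $F_0=S$ and $F_i=\{x_i\}\cup S_i$.

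Case $n_1=0$: then $S_1=S$, so $S$ is absorbed into $F_1=\{x_1\}\cup S$, and the facets are $F_1,\ldots,F_n$, which is $n$ facets. Any other $x_i$ with $n_i=0$ also gives a facet $\{x_i\}\cup S$, and these are distinct from $F_1$ because they contain different vertices of $C$. So the count is still $n$, with the formula for $F_i$ as stated.

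There is no serious obstacle here. The only care needed is in handling the degenerate cases: $S_i$ may be empty, in which case $F_i=\{x_i\}$, and $m=0$ may occur. Neither affects the argument.
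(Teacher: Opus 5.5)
Your proof is correct and takes the paper's approach. The paper's proof only notes that $\Sigma=\Delta(\overline G)$ is the clique complex of $\overline G$ and leaves the listing of maximal cliques to the reader. You carry out that step: a clique contains at most one vertex of $C$, $S$ is complete in $\overline G$, and $S$ lies inside $\{x_i\}\cup S_i$ exactly when $n_i=0$.
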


\begin{proof}
We have that $\Sigma=\Delta(\overline G)$, the largest simplicial complex with 1-simplex equal to $\overline G$.
\end{proof}

\begin{theorem}
Suppose $G$ is a split graph with $|N_G(x_i)\cap S|=n_i$, $i=1,\ldots,n$. Then the Hilbert series of $k[G]=k[\Sigma]$ is
$$\frac{1}{(1-t)^m}+\sum_{i=1}^n(\frac{1}{(1-t)^{m-n_i-1}}-\frac{1}{(1-t)^{m-n_i}}).$$
\end{theorem}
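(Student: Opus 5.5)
The plan is to apply the Hilbert series formula for fat forests (\cite[Theorem 1]{Fr1}, quoted above) to $\Sigma$, using the facet description in the preceding Lemma. Since $G$ is split, $\overline G$ is split and hence chordal. So $k[G]$ has a 2-linear resolution, $k[G]=k[\Delta(\overline G)]=k[\Sigma]$, and $\Sigma$ is a fat forest. Rather than rely on that abstractly, I will exhibit an explicit fat-forest ordering of simplices whose union is $\Sigma$, so that the dimensions $d_i$ and $r_j$ can be read off directly.

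Take the ordered list $F_0, F_1,\ldots,F_n$, where $F_0$ is the simplex on $S$ and $F_i$ is the simplex on $\{x_i\}\cup S_i$ with $S_i=S\setminus(N_G(x_i)\cap S)$. By the Lemma, $\Sigma=F_0\cup F_1\cup\cdots\cup F_n$ in both cases. If some $n_i=0$, then $F_0\subseteq F_i$, so $F_0$ is not a facet. The recursive definition of a fat forest does not require the pieces to be facets, so this redundancy causes no harm: the step adding $F_i$ has intersection $F_0$, which is still a simplex.

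The key step is computing the intersections. For $i\ge1$ I will show
$$(F_0\cup\cdots\cup F_{i-1})\cap F_i=S_i.$$
The vertex $x_i$ lies in no earlier $F_j$. Also $F_0\cap F_i=S_i$, and $F_j\cap F_i=S_j\cap S_i\subseteq S_i$ for $1\le j<i$. Hence the intersection is exactly the simplex on $S_i$, which has $m-n_i$ vertices. When $n_i=m$ it is empty, of dimension $-1$, which is allowed. This shows the union is a fat forest with the following data: $F_0$ has $d=m$ vertices, each $F_i$ has $d_i=m-n_i+1$ vertices, and each intersection has $r_i=m-n_i$ vertices.

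Plugging these into the fat forest formula gives
$$\frac{1}{(1-t)^m}+\sum_{i=1}^n\Bigl(\frac{1}{(1-t)^{m-n_i+1}}-\frac{1}{(1-t)^{m-n_i}}\Bigr).$$
This is the stated formula, except that the exponent comes out as $m-n_i+1$ rather than $m-n_i-1$. The sanity check $n=m=1$, $n_1=1$ confirms that $m-n_i+1$ is correct: there $k[G]=k[x,y]/(xy)$ has Hilbert series $\frac{2}{1-t}-1$, whereas the printed exponent would give a term $(1-t)^{1}$. So I read the statement as having a sign typo in that exponent.

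I expect the only delicate point to be the intersection computation together with the degenerate cases. These are $n_i=0$, where $F_0$ is swallowed by $F_i$, and $n_i=m$, where the intersection is empty and contributes $1/(1-t)^0=1$. In both cases the formula remains uniform, so no separate case analysis should be needed.
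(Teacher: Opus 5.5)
Your proposal is correct and follows the paper's proof: apply the fat-forest Hilbert series formula to $F_0,F_1,\ldots,F_n$, with the $i$th intersection equal to the simplex on $S_i$. The only difference is that you keep the redundant $F_0$ when some $n_i=0$ instead of splitting into cases, and this is exactly what the paper's rewriting $\frac{1}{(1-t)^{m+1}}=\frac{1}{(1-t)^m}+\bigl(\frac{1}{(1-t)^{m+1}}-\frac{1}{(1-t)^m}\bigr)$ accomplishes; your correction of the exponent to $m-n_i+1$ is also right and matches what the paper's own proof and its later Betti-number computation actually use.
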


\begin{proof}
Suppose first that $n_i>0$ for all $i$. Then $\Sigma$ is a fat forest with $F_i$ as in the lemma. We have $(F_0\cup\cdots\cup F_{k-1})\cap F_k=F_0\cap F_k$.
Now $|F_0|=m$, $|F_k|=m-n_i+1$, and $|F_k\cap F_0|=m-n_i$ if $k>0$. This proves the formula in case all $x_i$ has a neighbor in $S$.
Now suppose $n_1=0$. Then $|F_1|=m+1$, $|F_k|=m-n_i+1$, and
$|(F_1\cup\cdots\cup F_{k-1})\cap F_k|=|F_1\cap F_k|=m-n_i$. This gives the formula 
$$\frac{1}{(1-t)^{m+1}}+\sum_{i=2}^n(\frac{1}{(1-t)^{m-n_i+1}}-\frac{1}{(1-t)^{m-n_i}}).$$
Now compare with the first formula. It gives
$$\frac{1}{(1-t)^m}+(\frac{1}{(1-t)^{m+1}}-\frac{1}{(1-t)^m})+\sum_{i=2}^n(\frac{1}{(1-t)^{m-n_i+1}}-\frac{1}{(1-t)^{m-n_i}}).$$
Since $\frac{1}{(1-t)^m}+(\frac{1}{(1-t)^{m+1}}-\frac{1}{(1-t)^m})=\frac{1}{(1-t)^{m+1}}$, we see that the first formula is true also here.
\end{proof}

\begin{theorem}
All split graphs have 2-linear resolution. The nonzero Betti numbers for $j>0$ are 
$$\beta_{j,j+1}(k[G])=(-1)^j(\binom{n}{j+1}+1+\sum_{i=1}^n(\binom{n+n_i+1}{j+1}-\binom{n+n_i}{j+1})).$$
\end{theorem}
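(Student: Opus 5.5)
The plan has two parts: first prove that the resolution is $2$-linear, then read off the Betti numbers from the Hilbert series computed in the previous theorem, using Lemma~\ref{lin}.

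\emph{Linearity.} Every split graph is chordal, and the complement of a split graph $G$ is again split, with the roles of $C$ and $S$ exchanged. Hence $\overline G$ is chordal. By the theorem characterizing $2$-linear resolutions of edge rings ($k[G]$ has a $2$-linear resolution if and only if $\overline G$ is chordal), $k[G]$ has a $2$-linear resolution. Equivalently, $\Sigma$ is a fat forest, as was already seen directly in the proof of the Hilbert series theorem. So $\beta_{j,l}(k[G])=0$ for $j>0$ and $l\neq j+1$, and $b_j=\beta_{j,j+1}$.

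\emph{Betti numbers.} The polynomial ring has $n+m$ variables. Lemma~\ref{lin} then gives
$$H(t)\,(1-t)^{n+m}=1-b_1t^2+b_2t^3-\cdots+(-1)^pb_pt^{p+1},$$
so the coefficient of $t^{j+1}$ on the left equals $(-1)^j\beta_{j,j+1}$. I will multiply each summand of the Hilbert series from the previous theorem by $(1-t)^{n+m}$ and expand with the binomial theorem.

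The term $1/(1-t)^m$ becomes $(1-t)^n$. Its coefficient of $t^{j+1}$ is $(-1)^{j+1}\binom{n}{j+1}$.

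Each vertex $x_i$ contributes a difference of two powers $(1-t)^{n+n_i+\epsilon}-(1-t)^{n+n_i}$. Here $\epsilon=\pm1$ is determined by $|F_i|=m-n_i+1$ and $|F_i\cap F_0|=m-n_i$, as in the proof of the Hilbert series theorem. This difference contributes $(-1)^{j+1}\bigl(\binom{n+n_i+\epsilon}{j+1}-\binom{n+n_i}{j+1}\bigr)$.

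Multiplying the total coefficient by $(-1)^j$ gives $\beta_{j,j+1}$ as a signed sum of binomial coefficients. This sum depends only on the multiset $\{n_i\}$, as claimed in the abstract.

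\emph{Main obstacle.} The risky part is the bookkeeping, not any conceptual step. There are two points to watch.

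First, the exponents in the Hilbert series must be taken from the facet sizes $|F_i|=m-n_i+1$ and $|F_i\cap F_0|=m-n_i$ in the proof. The displayed formula of the previous theorem appears to contain an index slip ($m-n_i-1$ versus $m-n_i+1$), so I would recompute from the facets rather than trust it.

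Second, the overall sign $(-1)^j$ from Lemma~\ref{lin} and the sign $(-1)^{j+1}$ from the binomial expansion must be combined correctly, so that the final expression is nonnegative.

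As a check, I would verify the formula for $j=1$ against the number of edges of $G$, namely $\binom{n}{2}+\sum_i n_i$, and against a small example such as $n=m=1$.
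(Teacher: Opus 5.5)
Your route is the paper's: $2$-linearity because $\overline G$ is again split, hence chordal (equivalently, $\Sigma$ is a fat forest), and then Lemma~\ref{lin} applied to $(1-t)^{n+m}H(t)$. That part is fine.

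The gap is that you stop exactly where the content of the statement lies. You leave $\epsilon=\pm1$ open, never combine the two signs, and never compare your result with the displayed formula. The bookkeeping is in fact forced: $(1-t)^{n+m}/(1-t)^{m-n_i+1}=(1-t)^{n+n_i-1}$, so $\epsilon=-1$, and $(-1)^j\cdot(-1)^{j+1}=-1$. This gives
\[
\beta_{j,j+1}=\sum_{i=1}^n\Bigl(\binom{n+n_i}{j+1}-\binom{n+n_i-1}{j+1}\Bigr)-\binom{n}{j+1}=\sum_{i=1}^n\binom{n+n_i-1}{j}-\binom{n}{j+1}.
\]
This is not the displayed formula, in three ways:
\begin{itemize}
\item no constant $+1$ arises anywhere in your computation;
\item the binomials shift down to $n+n_i-1$ rather than up to $n+n_i+1$ (the choice $\epsilon=+1$ corresponds only to the misprinted exponent $m-n_i-1$);
\item the overall sign is $-1$, not $(-1)^j$.
\end{itemize}
Indeed the statement as written is false, and the check you propose would expose it. At $j=1$ it gives $-\bigl(\binom{n}{2}+1+\sum_i(n+n_i)\bigr)<0$, whereas $\beta_{1,2}$ is the edge count $\binom{n}{2}+\sum_i n_i$, which the corrected formula reproduces. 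So your hedges ``$\epsilon=\pm1$'' and ``a signed sum of binomial coefficients'' conceal a discrepancy that no argument can close. A complete write-up has to state the corrected formula and say that the displayed one cannot hold.

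For comparison, the paper's proof performs the same computation with the correct numerator $(1-t)^n+\sum_i\bigl((1-t)^{n+n_i-1}-(1-t)^{n+n_i}\bigr)$. Its final line, $(-1)^j\bigl(\binom{n}{j+1}+\sum_i(\binom{n+n_i-1}{j+1}-\binom{n+n_i}{j+1})\bigr)$, drops the factor $(-1)^{j+1}$ coming from the expansion of $(1-t)^a$, so it is correct only for odd $j$. Take the triangle ($n=2$, $m=1$, $n_1=n_2=1$), whose minimal resolution is $S\leftarrow S(-2)^3\leftarrow S(-3)^2\leftarrow 0$. The paper's expression yields $\beta_{2,3}=-2$, while the corrected formula gives $2$. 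Your remark that the signs must be combined so that the result is nonnegative is precisely the point the paper misses; you only need to carry it out.
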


\begin{proof}
The Hilbert series is 
$$\frac{(1-t)^n+\sum_{i=1}^n((1-t)^{n+n_i-1}-(1-t)^{n+n_i})}{(1-t)^{m+n}}.$$
Now $\beta_{j,j+1}$ equals $(-1)^j$ times the coefficient of $t^{j+1}$ in the numerator, so 
$$\beta_{j,j+1}=(-1)^j(\binom{n}{j+1}+\sum_{i=1}^n\binom{n+n_i-1}{j+1}-\binom{n+n_i}{j+1}).$$
\end{proof}

The Alexander dual $\Sigma^\vee$ of a simplicial complex $\Sigma$ on $\{1,\ldots,n\}$ is $\{F;F^c\notin\Sigma\}$, where $F^c=\{1,\ldots,n\}\setminus F\}$.

\begin{corollary}
If $k[\Sigma]$ is the Stanley-Reisner ring of a split graph, then $k[\Sigma^\vee]$ is Cohen-Macaulay with projective dimension 2.
\end{corollary}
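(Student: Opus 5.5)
The plan is to invoke the Eagon--Reiner theorem: for a simplicial complex $\Sigma$, the Stanley--Reisner ring $k[\Sigma]$ has a linear resolution if and only if $k[\Sigma^\vee]$ is Cohen--Macaulay. Moreover, the regularity of $k[\Sigma]$ is tied to the projective dimension of $k[\Sigma^\vee]$ (Terai's refinement): if $I_\Sigma$ has a $q$-linear resolution, then $I_{\Sigma^\vee}$ is Cohen--Macaulay of height $q$. The projective dimension of a Cohen--Macaulay quotient equals its codimension, so $\operatorname{pd} k[\Sigma^\vee]=q$.

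The key steps, in order, are as follows.

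\emph{Step 1.} By the preceding theorem, $k[\Sigma]=k[G]$ has a $2$-linear resolution: $I_\Sigma$ is generated by the quadrics $x_ix_j$ for edges of $G$, and all syzygies are linear. Equivalently, $\overline G$ is a split graph, hence chordal, and the earlier theorem applies.

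\emph{Step 2.} Apply Eagon--Reiner to conclude that $k[\Sigma^\vee]$ is Cohen--Macaulay.

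\emph{Step 3.} Identify the height of $I_{\Sigma^\vee}$. The minimal nonfaces of $\Sigma$ are exactly the edges $\{v_i,v_j\}$ of $G$. The facets of $\Sigma^\vee$ are the complements of these minimal nonfaces, so every facet of $\Sigma^\vee$ has $|V|-2$ vertices. Hence $\dim k[\Sigma^\vee]=|V|-2$ and $\operatorname{ht} I_{\Sigma^\vee}=2$. Concretely, $I_{\Sigma^\vee}=\bigcap_{(v_i,v_j)\in E}(x_i,x_j)$, the cover ideal of $G$, which is unmixed of height $2$.

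\emph{Step 4.} Since $k[\Sigma^\vee]$ is Cohen--Macaulay, Auslander--Buchsbaum gives $\operatorname{pd} k[\Sigma^\vee]=n+m-\operatorname{depth}=n+m-\dim=\operatorname{ht} I_{\Sigma^\vee}=2$.

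The main obstacle is that Eagon--Reiner is not among the results stated earlier in the paper, so it must be cited from the literature. If a self-contained argument is preferred, one could instead show directly that the cover ideal is Cohen--Macaulay of height $2$ via Hilbert--Burch, by exhibiting a $(r+1)\times r$ presentation. That route is much messier. A minor caveat is the degenerate case where $G$ has no edges (e.g.\ $n\le1$ and no edges to $S$); there $I_\Sigma=0$ and the statement should be read with the convention that $G$ has at least one edge.
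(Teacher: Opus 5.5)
Your proof is correct, and its Cohen--Macaulay half matches the paper's: both apply Eagon--Reiner to the $2$-linear resolution of $k[G]=k[\Sigma]$. The paper's proof writes $k[\Sigma]$ where it means $k[\Sigma^\vee]$; your formulation of Eagon--Reiner is the right one.

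The two routes differ on the projective dimension.
\begin{itemize}
\item The paper cites Terai's equality ${\rm pd}\, k[\Sigma^\vee]={\rm reg}\, I_\Sigma$. This holds for every $\Sigma$, with or without Cohen--Macaulayness, so ${\rm pd}=2$ follows at once from the linearity of the resolution.
\item You note that the facets of $\Sigma^\vee$ are the complements of the minimal nonfaces of $\Sigma$, i.e.\ of the edges of $G$. Hence $I_{\Sigma^\vee}=\bigcap_{(v_i,v_j)\in E}(x_i,x_j)$ is the cover ideal of $G$, unmixed of height $2$. Cohen--Macaulayness and Auslander--Buchsbaum then give ${\rm pd}=2$.
\end{itemize}

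Your route needs only Eagon--Reiner and elementary combinatorics. It also shows that the number $2$ comes only from $I_\Sigma$ being generated by quadrics; linearity of the resolution is used only to get the Cohen--Macaulay property. The paper's route is a one-line citation, but it relies on the stronger regularity/projective-dimension duality.

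Your caveat about the edgeless case ($n\le 1$ with no edges to $S$) is also right. There $I_\Sigma=0$ and the statement is vacuous or meaningless, and the paper does not address it.
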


\begin{proof}
Eagon-Reiner has shown \cite{E-R} that $k[\Sigma]$ is Cohen-Macaulay if and only if $k[\Sigma]$ has a linear resolution, and
Terai \cite{T} has shown that pd$(k[G^\vee])$ equals  $\max\{j-i;{\rm Tor}_{i,j}^A(I,k)\}=1+\max\{j-i;{\rm Tor}_{i,j}^A(k[\Sigma],k)$, 
where $A=k[x_1,\ldots,x_n]$ and $k[\Sigma]=A/I$.
\end{proof}

Now we give some examples of split graphs and their Betti numbers.

\begin{itemize}

\item{\bf Complete split graphs}

Let $G=C\sqcup S$ with $C=\{x_1,\ldots,x_n\}$, $S=\{y_1,\ldots,y_m\}$, and $(x_i,y_j)\in E$ for all $i$ and $j$. Then
$k[G]=k[\Sigma]$, where $\Sigma$ has facets $\mathcal S$, the simplex on $S$, and $\{x_i\}$, $i=1,\ldots,n$. The Hilbert series is 
$$\frac{1}{(1-t)^m}+\frac{n}{(1-t)}-n=\frac{(1-t)^n+n(1-t)^{n+m-1}-n(1-t)^{n+m}}{(1-t)^{n+m}}.$$ 
Thus, for $i>0$, the only nonzero Betti numbers are
$$\beta_{i,i+1}(k[G])=(-1)^i(\binom{n}{i+1}+n\binom{n+m+1}{i+1}-n\binom{n+m}{i+1}).$$
Singh-Verma \cite{S-V} get 
$$\beta_{i,i+1}=i\binom{n}{i+1}+\sum_{\substack{r+s=i+1\\r,s\ge1}}r\binom{n}{r}\binom{m}{s}.$$

\item{\bf Nearly complete split graphs}

Here we have the same edges as above except $(x_i,y_i)$, $i=1,\ldots,\min\{m,n\}$. We have the facets on $\mathcal S$, and
the simplex on $\{x_i,y_i\})$, $i=1,\ldots,N$.
The Hilbert series is
$$\frac{1}{(1-t)^{m}}+\frac{N}{(1-t)^2}-\frac{N}{(1-t)}=\frac{(1-t)^{n}+N(1-t)^{n+m-2}-N(1-t)^{n+m-1}}{(1-t)^{n+m}}.$$
Thus, for $i>0$, the only nonzero Betti numbers are
$$\beta_{i,i+1}(k[G])=(-1)^i(\binom{n}{i+1}+\binom{n+m-2}{i+1}-N\binom{n+m-1}{i+1}).$$
Singh-Verma \cite{S-V} get 
$$\beta_{i,i+1}=i\binom{n}{i+1}+\sum_{p=1}^ip\binom{n}{p}\binom{m-p}{i+1-p}+
\sum_{t=1}^N\binom{N}{t}\sum_{p=1}^{i+1-2t}p\binom{n-t}{p}\binom{m-t-p}{i+1-2t-p}.$$

\item{\bf Empty split graphs}

Here we mean that there are no edges between the two parts of vertices. The facets are $\mathcal S\cup\{x_i\}$, $i=1,\ldots,N$.
The Hilbert series is
$$\frac{N}{(1-t)^{m+1}}-\frac{N-1}{(1-t)^m}=\frac{N(1-t)^{n-1}-(N-1)(1-t)^{n}}{(1-t)^{n+m}}.$$
Thus, for $i>0$, the only nonzero Betti numbers are
$$\beta_{i,i+1}(k[G])=(-1)^i(N\binom{n-1}{i+1}+(N-1)\binom{n}{i+1}.$$

\item{\bf Every $x_i$ has exactly one neighbor in $S$}

Let the edges between the two parts be $(x_iy_{j_i})$, $i=1,\ldots,n$. The facets are $\mathcal S$ and $\{x_i\}\cup(S\setminus y_{j_i})$,
so the Hilbert series is 
$$\frac{n+1}{(1-t)^n}-\frac{n}{(1-t)^{n-1}}=\frac{(n+1)(1-t)^m-n(1-t)^{m+1}}{(1-t)^{m+n}}.$$
Thus, for $i>0$, the only Betti numbers are
$$\beta_{i,i+1}(k[G])=(-1)^i((n+1)\binom{m}{i+1}-n\binom{m+1}{i+1}).$$
\end{itemize}

\section{Dimension, projective dimension, depth, and Cohen-Macaulayness}
Now we will determine which edge rings of split graphs that are Cohen-Macaulay, i.e., have the same depth as dimension.
The dimension of $k[\Sigma]$ equals the largest size of a facet. Thus, if some $x_i$ has no neighbor in $S$, the dimension is $n+1$, and
otherwise it is $n$. That the depth of $k[G]$ is $n+1$ is equivalent to that the projective dimension is $m-1$ according to Auslander-Buchsbaum.
The formula for Betti numbers shows that pd$(k[G])=m+1$ is true only if all $x_i$ have no neighbor in $S$. In case the depth of $k[G]$ is $n$,
so pd$(k[G])=m$, we see that this corresponds to the case when all $x_i$ has exactly one neighbor in $S$. We conclude:

\begin{theorem}
For the split graph $G$, we get that $k[G]$ is Cohen-Macaulay if and only if either no $x_i$ has a neighbor in $S$, or all $x_i$ has exactly one
neighbor in $S$.
\end{theorem}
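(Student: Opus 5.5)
The plan is to prove the two directions separately, working throughout with the complex $\Sigma=\Delta(\overline G)$ on the $n+m$ vertices, for which $k[G]=k[\Sigma]$. Its facets are given by the lemma on facets above. For necessity I use the standard fact that the simplicial complex of a Cohen–Macaulay Stanley–Reisner ring is pure. For sufficiency I use the fat-forest decomposition from the proof of the Hilbert series theorem: in the relevant cases it turns out to be a shelling, and shellable complexes are Cohen–Macaulay. This avoids any sign bookkeeping in the Betti number formula. As a cross-check, one could instead read off the projective dimension from the Hilbert series numerator and apply Auslander–Buchsbaum, as sketched in the text before the statement.

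\emph{Necessity.} Suppose $k[G]$ is Cohen–Macaulay, so all facets of $\Sigma$ have the same cardinality. There are two cases.
\begin{itemize}
\item If every $n_i>0$, the facets are $F_0=S$, with $|F_0|=m$, and $F_i=\{x_i\}\cup S_i$, with $|F_i|=m-n_i+1$. Purity forces $m-n_i+1=m$, that is, $n_i=1$ for all $i$.
\item If some $n_i=0$, say $n_1=0$, the facets are $\{x_1\}\cup S$, of size $m+1$, together with the $F_i$ for $i\ge 2$, of size $m-n_i+1$. Purity forces $n_i=0$ for all $i$.
\end{itemize}
In either case we land in one of the two situations of the statement. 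The only point to check is that the lemma's list really consists of facets, with no containments among them. This holds when $n_i>0$, since then $x_i\in F_i\not\subseteq S$ and $S\not\subseteq F_i$. When $n_i=0$ the set $F_i=\{x_i\}\cup S$ is itself a facet, which is what the lemma records.

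\emph{Sufficiency.} Order the facets as in the proof of the Hilbert series theorem. There we saw that $(F_0\cup\cdots\cup F_{k-1})\cap F_k=F_0\cap F_k$, respectively $F_1\cap F_k$ in the second case, and that this intersection is a simplex.
\begin{itemize}
\item If all $n_i=1$, with $x_i$ adjacent to $y_{j_i}$, then every facet has $m$ vertices. Each new facet $F_k=\{x_k\}\cup(S\setminus\{y_{j_k}\})$ meets the earlier union in the simplex $S\setminus\{y_{j_k}\}$, which has $m-1$ vertices and so is a codimension-one face of $F_k$.
\item If all $n_i=0$, the facets are $S\cup\{x_i\}$, each with $m+1$ vertices. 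The $k$th facet meets the earlier ones exactly in the simplex $S$, again of codimension one.
\end{itemize}
In both cases the intersection of each new facet with the earlier ones is a pure complex of codimension one in it. Hence the order is a shelling of a pure complex, so $\Sigma$ is shellable and $k[\Sigma]=k[G]$ is Cohen–Macaulay over any field.

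The main obstacle is making the shelling claim airtight. One must verify that the intersection of the new facet with the \emph{whole} previous union, not just with $F_0$ (or $F_1$), is exactly the stated simplex. This reduces to the observation that distinct $F_i,F_j$ with $i,j\ge1$ meet only inside $S$, and any subset of $S$ lies in $F_0$ (or in $F_1$ when $n_1=0$). That observation is already implicit in the fat-forest computation above.
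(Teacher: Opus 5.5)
Your proof is correct, but it takes a different route from the paper.

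The paper's route is homological. It reads the Krull dimension off the largest facet of $\Sigma$. It reads the projective dimension off the degree of the Hilbert series numerator $(1-t)^n+t\sum_{i}(1-t)^{n+n_i-1}$, which is legitimate because the resolution is $2$-linear. It then compares the two via Auslander--Buchsbaum.

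You replace this homological input with combinatorics:
\begin{itemize}
\item For necessity, purity of Cohen--Macaulay complexes settles it at once from the facet sizes $m$, $m+1$ and $m-n_i+1$.
\item For sufficiency, the fat-forest ordering of the facets is a shelling in exactly the two cases of the statement. Your check that each $F_k$ meets the \emph{whole} earlier union only in $F_k\cap S$ is the right verification.
\end{itemize}

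Your route is self-contained, and it gives the stronger conclusion that $\Sigma$ is shellable in those cases. It also avoids verifying that the top coefficient of the numerator does not cancel. The paper leaves that step implicit, and its text interchanges $n$ and $m$ in the dimension and projective-dimension values.

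What the paper's route buys, carried out carefully, is more information: the projective dimension, and hence the depth, of $k[G]$ for every split graph. For $n\ge 1$ the depth works out to $m+1-\max_i n_i$. This measures exactly how far $k[G]$ is from being Cohen--Macaulay.
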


\begin{note}
This corresponds to the two last examples in the previous section.
\end{note}

\begin{note}
There is no conflict of interests.
\end{note}


\begin{thebibliography}{99}
 \bibitem{E-R}
J. A. Eagon and V. Reiner,
\emph{Resolutions of Stanley--Reisner rings and Alexander duality},
J. Pure Appl. Algebra \textbf{130} (1998), no. 3, 265--275.

\bibitem{F-H}
S. F\"oldes and P. L. Hammer,
\emph{Split graphs},
Proceedings of the Eighth Southeastern Conference on Combinatorics, Graph Theory and Computing, Congressus Numerantium \textbf{XIX} (1977), 311--315.

\bibitem{Fr}
R. Fr\"oberg,
\emph{On Stanley--Reisner rings},
Banach Center Publ. \textbf{26} (1990), no. 2, 57--70.

\bibitem{Fr1}
R. Fr\"oberg,
\emph{Betti numbers of fat forests and their Alexander dual},
J. Algebraic Combin. \textbf{56} (2022), 1023--1030.

\bibitem{S-V}
P. Singh and R. Verma,
\emph{Betti numbers of edge ideals of some split graphs},
Comm. Algebra \textbf{48} (2020), no. 12, 5026--5037, doi:10.1080/00927872.2020.1777559.

\bibitem{T}
N. Terai,
\emph{Alexander duality theorem and Stanley--Reisner rings},
S\=urikaisekikenky\=usho K\=oky\=uroku \textbf{1078} (1999), 174--184.


 \end{thebibliography}
\end{document}